\documentclass[12pt, reqno]{amsart}
\usepackage{latexsym, amsmath, amssymb, amsthm, mathscinet, mathtools, enumerate}
\usepackage{cases, verbatim}
\usepackage[active]{srcltx}
\subjclass[2020]{22E30, 43A80, 53C17, 53C21}
\keywords{Step-two Carnot groups, Uniform doubling}
\usepackage{hyperref}
\usepackage{xcolor}
\usepackage{enumitem}

\usepackage[margin=1.5in]{geometry}

\def\R{\mathbb R}

\def\G{\mathbb G}
\def\g{\mathfrak{g}}

\def\X{\mathrm X}

\def\dim{\mathrm {dim}}

\newtheorem{theorem}{Theorem}
\newtheorem{lemma}{Lemma}
\newtheorem{remark}{Remark}

\newtheorem{proposition}{Proposition}

\title[Step 2 Carnot groups are UD]{Left-invariant metrics on step-two Carnot groups are uniformly doubling}               

\author[C. Bi]{Cheng Bi}
\address[C. Bi]{Chern Institute of Mathematics, Nankai University, Tianjin 300071, P. R. China, {\tt cbi24158@gmail.com}}

\author[Y. Zhang]{Ye Zhang}
\address[Y.~Zhang]{SISSA, via Bonomea 265, 34136 Trieste, Italy, {\tt yezhang@sissa.it}}

\begin{document}

 \maketitle

\vspace{-1.0cm}

\begin{abstract}
We prove that the uniform doubling property holds for left-invariant Riemannian metrics on every step-two Carnot group and the sharp uniform doubling constant is $2^Q$, where $Q$ is the homogeneous dimension. More generally, every such metric satisfies the Bishop--Gromov type estimate.
\end{abstract}

\vspace{1.0cm}

\section{Introduction}

\medskip

The uniform doubling property is an interesting phenomenon on Lie groups. Its study was initiated by Eldredge, Gordina, and Saloff-Coste \cite{EGS18}. Let \(\G \) be a connected Lie group, and let \(\mathcal{L}(\G)\) denote the set of all left-invariant Riemannian metrics on \(\G\). Write \(\nu_g\) for the Riemannian volume induced by $g\in \mathcal{L}(\G)$, and $B_g (p,r)$ for  the open ball of the Riemannian distance induced by $g \in \mathcal{L}(\G)$ centered at $p \in \G$ with radius $r > 0$. We say $\G$ is uniform doubling if
\begin{equation*}
    D(\G) \ := \sup_{g \in \mathcal{L}(\G), \, p \in \G, \, r > 0} \frac{\nu_g(B_g(p,2r))}{\nu_g(B_g(p,r))} <+ \infty. 
\end{equation*}
The authors of \cite{EGS18} indeed conjectured that every connected compact Lie group is uniform doubling. To the best of authors' knowledge, the nontrivial examples only include torus, $\mathrm{SU}(2)$, $\mathrm{SO}(3)$ and more generally, quotient groups of $\mathrm{SU}(2) \times \R^n$ \cite{EGS18,EGS24}. Their proofs rely on an explicit formula describing of the growth of the volume.

\medskip

The purpose of this paper is to investigate the uniform doubling property in the setting of step-two Carnot groups. These groups, including the famous Heisenberg groups, form a broad class of nilpotent Lie groups and serve as fundamental model spaces in sub-Riemannian geometry since they appear naturally as tangent spaces of sub-Riemannian manifolds. Our main theorem is as follows.
\begin{theorem}\label{t1}
    Assume $\G$ is a step-two Carnot group. Let $Q$ be the homogeneous dimension of $\G$ (see \eqref{hd}). Then $D(\G) = 2^Q$.
\end{theorem}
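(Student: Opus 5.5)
The plan has two parts: a lower bound $D(\G)\ge 2^Q$, obtained by degenerating the Riemannian metrics to the sub-Riemannian one, and an upper bound $\nu_g(B_g(p,2r))\le 2^Q\nu_g(B_g(p,r))$ for every $g\in\mathcal L(\G)$, which I would derive from a Bishop--Gromov type monotonicity.

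\textbf{Lower bound.} Write $\g=V_1\oplus V_2$ with $[V_1,V_1]=V_2$, and let $Q=\dim V_1+2\dim V_2$. Fix a scalar product on $V_1$ and consider the metrics $g_\epsilon$ obtained by extending it with $\epsilon^{-2}$ times a fixed scalar product on $V_2$. As $\epsilon\to0$, $d_{g_\epsilon}$ increases to the Carnot--Carath\'eodory distance $d_{cc}$, locally uniformly. The volumes $\nu_{g_\epsilon}$ are a constant multiple $c_\epsilon$ of the Haar measure, and $c_\epsilon$ cancels in the ratio. Since $d_{cc}$ is continuous and the spheres of $d_{cc}$ have Haar measure zero (by dilation homogeneity), we get $|B_{g_\epsilon}(e,r)|\to|B_{cc}(e,r)|$. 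Homogeneity under the dilations $\delta_\lambda$ gives $|B_{cc}(e,2r)|=2^Q|B_{cc}(e,r)|$, hence $D(\G)\ge2^Q$.

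\textbf{Upper bound.} By left invariance it suffices to take $p=e$. Let $V(r)=|B_g(e,r)|$. I would aim to prove that $r\mapsto V(r)/r^Q$ is nonincreasing, which immediately gives the doubling constant $2^Q$. The natural tool is the exponential map with the cut locus removed: $V(r)=\int_{S}\int_0^{\min(r,t_c(v))}J(t,v)\,dt\,dv$, where $S$ is the unit sphere in $\g$. It is enough to show that $t\mapsto J(t,v)/t^{Q-1}$ is nonincreasing on $(0,t_c(v))$; the standard Bishop--Gromov argument then applies. Geodesics of a left-invariant metric on a step-two group can be computed explicitly. Writing $v=(x,z)$, the horizontal part evolves through $e^{tA_z}$, where $A_z$ is the skew map defined by $\langle A_z X,Y\rangle=\langle z,[X,Y]\rangle$ (suitably normalized by $g$), and the vertical part is given by integrals of brackets. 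This yields a closed-form Jacobian, which one can block-diagonalize using the eigenvalues $\pm i\theta_j$ of $A_z$. Up to the constant $|x|$-dependent factors, the Jacobian should then be a product of $t^{\dim V_1+\dim V_2-1}$ with functions of the form $\frac{\sin\theta t-\theta t\cos\theta t}{\theta^3}$ and $\frac{\sin\theta t}{\theta}$-type terms. After appropriate homogenization, these behave like $t^{2}$ per vertical direction, for a total power of $t^{Q-1}$.

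\textbf{Main obstacle.} The crux is the monotonicity of $J(t,v)/t^{Q-1}$. This reduces to showing that each one-variable factor, such as $f(s)=(2-2\cos s-s\sin s)/s^4$ or $(\sin s-s\cos s)/s^3$, is nonincreasing on $[0,s_c)$, where $s_c$ is its first zero, which dominates the cut time. My first attempt would be to expand the Jacobian as a Gram-type determinant in a basis adapted to $A_z$ and verify the monotonicity by log-derivative estimates of these elementary functions. If the Riemannian (rather than sub-Riemannian) Jacobian fails to factor cleanly, the fallback is a comparison argument. Dilations $\delta_\lambda$ map $g$ to another left-invariant metric $g^\lambda$, and $B_g(e,\lambda r)=\delta_\lambda B_{g^{\lambda'}}(e,r)$ for suitable rescaled metrics. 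One would then show directly that $\delta_{\lambda}(B_g(e,r))\supseteq B_g(e,\lambda r)$ for $\lambda\ge1$, using the fact that dilating a $g$-geodesic by $\lambda$ multiplies the horizontal length by $\lambda$ and the vertical speed by at most $\lambda$ (indeed $\lambda^2\ge\lambda$ only hurts, so the geodesic would be reparametrized via a curve-shortening in the vertical component). This would give $V(\lambda r)\le\lambda^QV(r)$ directly.
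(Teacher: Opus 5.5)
\textbf{Verdict: there is a genuine gap in the upper bound.} Your lower bound is the paper's argument and is fine. Both of your routes to $\nu_g(B_g(p,2r))\le 2^Q\nu_g(B_g(p,r))$ break down.

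\textbf{The main route is false in general.} Monotonicity of $t\mapsto J(t,v)/t^{Q-1}$ on $(0,t_c(v))$ for all $v$ is exactly the measure contraction property $\mathrm{MCP}(0,Q)$ for $(\G,d_g,\nu_g)$, and it fails for some $g\in\mathcal L(\G)$. The property $\mathrm{MCP}(0,N)$ is stable under pointed measured Gromov--Hausdorff convergence, and rescaling $\nu_g$ by a constant does not affect it. The family you use for the lower bound converges to the sub-Riemannian structure, whose curvature exponent is strictly larger than $Q$: by \cite{J09}, $\mathbb H^n$ satisfies $\mathrm{MCP}(0,N)$ iff $N\ge 2n+3=Q+1$. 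So $\mathrm{MCP}(0,Q)$ must fail along that family.

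Your own heuristics show where the count goes wrong. Take the sub-Riemannian $\mathbb H^1$, where $Q=4$, with exponential coordinates $(t,\phi,\theta)$ (time, angle of the unit horizontal covector, vertical covector). Up to a constant the Jacobian is $\theta^{-4}(2-2\cos\theta t-\theta t\sin\theta t)=t^{4}f(\theta t)$, with your $f$. Monotonicity of $f$ therefore only gives $J/t^{Q}$ nonincreasing, i.e.\ exponent $Q+1$. Meanwhile $J/t^{Q-1}=t\,f(\theta t)$ increases near $t=0$. The power $t^{Q-1}$ appears only after integrating $\theta$ over the cut-time range $|\theta|<2\pi/t$. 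So $V(\lambda r)\le\lambda^QV(r)$ is genuinely an integrated statement and cannot come from a pointwise Bishop--Gromov argument. The paper's introduction points out exactly this obstruction.

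\textbf{The fallback has the right mechanism but misses the key step.} It is the paper's mechanism: $B_g(0,\lambda r)=\delta_\lambda(B_{g'}(0,r))\subseteq\delta_\lambda(B_g(0,r))$ with $g'\ge g$. With $V_1\oplus V_2$ fixed in advance, however, a general $g$ has cross terms between $V_1$ and $V_2$. Then $g$-length does not split into horizontal and vertical parts. The pointwise bound you need is
\[
\|u_1+\lambda^{-1}u_2\|_g\le\|u_1+u_2\|_g,\qquad u_1\in V_1,\ u_2\in V_2,\ \lambda\ge 1.
\]
It fails for every $\lambda>1$ as soon as $\langle u_1,u_2\rangle_g<-\|u_2\|_g^2$, and such $u_1,u_2$ exist whenever $V_1\not\perp_g V_2$. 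Equivalently, $\lambda^{-2}\delta_\lambda^*g$ dominates $g$ only if the cross term vanishes.

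The missing idea, which the paper calls its main novelty, is to let the stratification depend on $g$: take $V_2=[\g,\g]$ and $V_1:=V_2^{\perp_g}$. This is a legitimate first layer, since any complement $V$ of $[\g,\g]$ satisfies $[V,V]=[\g,\g]$ and $[V,[\g,\g]]=\{0\}$. Its dilations still satisfy $\mu(\delta_\lambda A)=\lambda^Q\mu(A)$.

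Your direction is also reversed. One applies $\delta_{1/\lambda}$, not $\delta_\lambda$, to a curve from $0$ of $g$-length $<\lambda r$. This multiplies the $V_1$-components of its left-trivialized velocity by $\lambda^{-1}$ and the $V_2$-components by $\lambda^{-2}\le\lambda^{-1}$. By orthogonality the image curve has length $<r$. That gives $B_g(0,\lambda r)\subseteq\delta_\lambda(B_g(0,r))$ with no reparametrization or curve shortening. Hence $\nu_g(B_g(p,\lambda r))\le\lambda^Q\nu_g(B_g(p,r))$ for all $\lambda\ge1$, which yields both $D(\G)\le 2^Q$ and Proposition~\ref{pro1}.
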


Note that we not only prove that every step-two Carnot group is uniform doubling but also find the exact value of $D(\G)$. Recall that the family of left-invariant metrics under consideration has no uniform Ricci lower bound, otherwise as measured Gromov--Hausdorff limits of the left-invariant Riemannian metrics, the left-invariant sub-Riemannian metrics will satisfy some curvature-dimension condition, but it is not the case, see \cite{J09, J10, AS20, J21, RS23, MR23, NP25}. Furthermore, although a fixed left-invariant sub-Riemannian metric may satisfy the measure contraction property $\mathrm{MCP}(0,N)$, a weaker variant of the curvature-dimension condition implying a Bishop–Gromov-type volume comparison (cf. \cite{BR20}). %although for one fixed left-invariant sub-Riemannian metric a weaker variant of the curvature-dimension condition, called measure contraction property (or MCP for short) does hold (cf. \cite{BR20}), which implies Bishop–Gromov volume comparison estimate, 
It is still not clear how we go back to the Riemannian metric. Moreover, even though we can go back, the curvature exponent (the minimal number $N$ such that $\mathrm{MCP}(0,N)$ holds) is strictly larger than $Q$. We refer interested reader to \cite{J09, R13, R16, BR18, GN24, Z25} for the results on the curvature exponent. So we cannot use curvature-dimension conditions to deduce Theorem \ref{t1}. However, as a byproduct of our proof, we can also obtain the following Bishop–Gromov type inequality. 

\begin{proposition}\label{pro1}
Assume $\G$ is a step-two Carnot group.   For any  $g \in \mathcal{L}(\mathbb{G})$, we have
    \[
    \frac{\nu_g(B_g (p,R))}{\nu_g(B_g (p,r))} \le \left( \frac{R}{r} \right)^Q, \qquad \forall \, p \in \G, \ 0<r \le R.
    \]
\end{proposition}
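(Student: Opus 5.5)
The plan is to compare the ball of radius $R$ with the ball of radius $r$ using a group dilation $\delta_\mu$ with $\mu = r/R \le 1$. Concretely, I will show that $\delta_\mu$ maps $B_g(e,R)$ into $B_g(e,r)$ and multiplies Riemannian volume by exactly $\mu^Q$. By left-invariance of $g$ (left translations are isometries preserving $\nu_g$), it suffices to treat $p=e$. Since $\nu_g$ is a left Haar measure and $\delta_\mu$ is a group automorphism with Jacobian $\mu^Q$, we get
\[
\mu^Q\,\nu_g(B_g(e,R)) = \nu_g(\delta_\mu B_g(e,R)) \le \nu_g(B_g(e,r)),
\]
which is the claimed inequality.

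The key point is to pick the stratification adapted to $g$, since an arbitrary fixed grading will not work for metrics that mix the layers. Write $\g$ for the Lie algebra, $V_2=[\g,\g]$, and let $V_1' := V_2^{\perp_g}$ be the $g$-orthogonal complement of $V_2$ in $\g$. I will check three things.
\begin{enumerate}
\item $\g = V_1'\oplus V_2$ is again a stratification. We have $[V_1',V_1'] \subset V_2$. Moreover $[V_1',V_1'] = [\g,\g]$, because $V_2$ is central in a step-two algebra, so brackets only see the $V_1'$-components.
\item The map $\delta_\mu := \mu\,\mathrm{id}$ on $V_1'$ and $\mu^2\,\mathrm{id}$ on $V_2$ is therefore a Lie algebra automorphism and exponentiates to a group automorphism of $\G$.
\item The Jacobian of $\delta_\mu$ is $\mu^{\dim V_1' + 2\dim V_2} = \mu^Q$. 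The homogeneous dimension $Q$ in \eqref{hd} depends only on $\dim V_1 = \dim \g - \dim[\g,\g]$ and $\dim V_2$, so it is unaffected by the change of complement.
\end{enumerate}

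For the inclusion, take $x \in B_g(e,R)$ and a horizontal-or-not $C^1$ curve $\gamma$ from $e$ to $x$ of $g$-length $<R$. Write its left-trivialized velocity as $\dot\gamma(t) = dL_{\gamma(t)}(v_1(t)+v_2(t))$ with $v_i(t)\in V_i'$ (taking $V_2'=V_2$). Since $\delta_\mu$ is an automorphism, $\delta_\mu\circ\gamma$ has left-trivialized velocity $\mu v_1 + \mu^2 v_2$. By $g$-orthogonality of $V_1'$ and $V_2$ and $\mu\le 1$,
\[
|\mu v_1+\mu^2 v_2|_g^2 = \mu^2\bigl(|v_1|_g^2+\mu^2|v_2|_g^2\bigr) \le \mu^2 |v_1+v_2|_g^2 .
\]
Hence $\delta_\mu\circ\gamma$ joins $e$ to $\delta_\mu x$ with length $<\mu R = r$, so $\delta_\mu x \in B_g(e,r)$.

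The only genuinely delicate step is the first: realizing that one must re-choose the horizontal layer as $V_2^{\perp_g}$ and verifying that this yields a legitimate Carnot dilation with the same $Q$. Without orthogonality, cross terms $\langle v_1,v_2\rangle_g$ could make $|v_1+\mu v_2|_g$ exceed $|v_1+v_2|_g$, and the contraction estimate would fail. Everything else is routine bookkeeping with Haar measure and left-invariance.
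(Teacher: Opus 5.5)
Your proof is correct and takes essentially the paper's route: the paper also sets $\g_2=[\g,\g]$ and $\g_1$ its $g$-orthogonal complement, and Lemma \ref{lort} with \eqref{compar} gives $B_g(0,R)=\delta_{R/r}\bigl(B_{g_{(R/r)^2}}(0,r)\bigr)\subset\delta_{R/r}\bigl(B_g(0,r)\bigr)$, which is your inclusion $\delta_{r/R}B_g(e,R)\subset B_g(e,r)$. The only difference is that you fold the exact scaling identity and the metric comparison into the single one-sided estimate $|\mu v_1+\mu^2 v_2|_g\le \mu|v_1+v_2|_g$ (your $\mu=r/R$), which is all the proposition needs.
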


We split the proof of Theorem \ref{t1} into two parts: 1. \(D(\G) \le 2^Q\); 2. \(D(\G) \ge 2^Q\). To prove \(D(\G) \le 2^Q\), we use a key scaling property (see Lemma \ref{lort}) when the rescaling is applied only to the second layer and the two layers in the stratification are orthogonal. Then we will choose suitable subspaces to realize such orthogonality, which constitutes the most significant innovation of the paper, since the natural approach to such a problem is to first fix a stratification and then express the Riemannian metric relative to that stratification. However, on  higher-step groups it is not clear how to choose such subspaces, which is the main obstacle there.  On the other hand, the inequality \(D(\G) \ge 2^Q\) comes from a limiting argument involving a scaling family of Riemannian metrics converging to a sub-Riemannian metric. The homogeneity of the latter gives that the volume doubling constant is exactly \(2^Q\). %Therefore, the doubling behavior of the Riemannian family \(\mathcal L(\G)\) is governed by the sub-Riemannian limit. 
Alternatively, the lower bound \(D(\G)\geq 2^Q\) follows from the recent volume asymptotic result $\nu_g(B_g(p,r)) = c_g r^Q + O(r^{Q-2})$ as $r \to + \infty$ \cite[Theorem 4.4]{DNGR25}.

\medskip

To the best of our knowledge, no uniform doubling result was previously known for the full family of left-invariant Riemannian metrics on an arbitrary step-two Carnot group. In \cite{BL26}, by establishing uniform volume upper and lower bounds, uniform doubling property is obtained for a natural class of metrics on generalized H-type groups, with a doubling estimate of type \(D\leq C2^Q\), see also Remark \ref{rem}. The proof of Theorem \ref{t1}, instead, is more direct and yields the exact constant \(2^Q\).

\medskip

As noted in \cite{EGS18}, the uniform doubling property has several important analytic consequences. To keep the paper as concise as possible and avoid introducing excessive notation, we do not state the precise theorems here; interested readers may refer to \cite[(1)-(5) of Theorem 9.1]{EGS24} for details.

\medskip

This article is organized as follows. In Section \ref{SecPre} we recall basic facts of step-two Carnot groups. In Section \ref{SecPro} we give the proof of Theorem \ref{t1} as well as Proposition \ref{pro1}.

\medskip

\section{Preliminaries}\label{SecPre}

\medskip

\subsection{Step-two Carnot groups}

Recall that a connected and simply connected Lie group $\G = (\G, \cdot)$ is a step-two Carnot group
if its left-invariant Lie algebra $\mathfrak{g}$ admits a stratification
\begin{align}\label{defstr}
\mathfrak{g} = \mathfrak{g}_1 \oplus \mathfrak{g}_2, \quad
[\mathfrak{g}_1, \mathfrak{g}_1] = \mathfrak{g}_2, \quad
[\mathfrak{g}_1, \mathfrak{g}_2] = \{0\},
\end{align}
where $[\cdot,\cdot]$ denotes the Lie bracket on $\mathfrak{g}$. Via the Lie group exponential map, we can regard $(\G, \cdot)$ as $(\g, \ast)$ with the group operation $\ast$ on $\g$ given by
\begin{align}\label{mulalg}
a \ast b := a + b + \frac{1}{2} [a,b],  \qquad \forall \, a,b \in \g.
\end{align}
On $(\g, \ast)$ the identity is $0$ and the inverse element of $a$ is $- a$. Note that the stratification in \eqref{defstr} is not necessarily unique. For more details, we refer to \cite{BLU07}.

\medskip

\subsection{Left-invariant Riemannian structures}

Given an inner product $\langle \cdot, \cdot \rangle$ of $\g$, we can define a left-invariant Riemannian metric $g$ on $T \G$. To be more precise, let  $n := \dim \g$ and assume that $\{\X_1,\ldots, \X_n\}$ is an orthonormal basis of $\g$ under $\langle \cdot, \cdot \rangle$. Then $\{\X_1(p),\ldots, \X_n(p)\}$ forms an orthonormal basis of $T_p \G$ at every point $p \in \G$. Here 
\begin{align}\label{defleft}
\X_j (p) = d L_p (0) \X_j, \qquad \forall \, p \in \G,    
\end{align}
where $L_p(q) := p \cdot q$ and $d$ denotes the differential. For a smooth path $\gamma : [0,1] \to \G$  we can calculate its length  by: 
\begin{equation}
\label{lenght}
\ell_g(\gamma) := \int_0^1 \sqrt{g_{\gamma(\tau)}(\dot{\gamma}(\tau),\dot{\gamma}(\tau))} \, d\tau.
\end{equation}
Then the Riemannian distance between two points $p, q \in \G$ w.r.t. $g$ is defined as
\[
d_g(p,q) := \inf\{\ell_g(\gamma): \gamma \mbox{ smooth}, \gamma(0) = p, \gamma(1) = q\}.
\]
From the definition it is clear that if $g \le g'$ we have
\begin{align}\label{compar}
    d_g(p,q) \le d_{g'}(p,q), \qquad \forall \, p,q \in \G.
\end{align}

\medskip

\subsection{Dilations}

In this subsection we fix a stratification satisfying \eqref{defstr}. We define  for every $\lambda > 0$, the dilation $\delta_\lambda$ on the Lie algebra is the linear map such that $\delta_\lambda a = \lambda^\ell a$ for $a \in \g_\ell$, $\ell = 1,2$. It is actually a Lie algebra automorphism so it induces the dilation on $\G$, which is a Lie group automorphism and we still denote it by $\delta_\lambda$. We choose $\mu$ to be the pushforward measure of the Lebesgue measure on $\g$ by the Lie group exponential map (so it is a Haar measure on $\G$). As a result, we have 
\begin{align}\label{Haar}
\mu(\delta_\lambda(A)) = \lambda^Q \mu(A), \qquad \forall \, A \subset \G \ \mbox{measurable}, \lambda > 0,
\end{align}
with homogeneous dimension 
\begin{equation}\label{hd}
    Q: = \dim \g_1 + 2 \dim \g_2,
\end{equation}
 which is strictly larger than the topological dimension  $n = \dim \g$. 

\medskip

\section{Proof of Theorem \ref{t1}}\label{SecPro}

\medskip

\begin{lemma}\label{lort}
Assume that the left-invariant Riemannian metric $g$ is induced by an inner  product $\langle \cdot, \cdot \rangle$ on $\g$ such that the subspaces  $\g_1$ and $\g_2$ in the stratification \eqref{defstr} are orthogonal. We decompose 
\[
\langle \cdot, \cdot \rangle = \langle \cdot, \cdot \rangle_{\g_1} + \langle \cdot, \cdot \rangle_{\g_2}
\]
and for $\lambda > 0$ define
\[
\langle \cdot, \cdot \rangle_\lambda := \langle \cdot, \cdot \rangle_{\g_1} + \lambda \langle \cdot, \cdot \rangle_{\g_2}.
\]
If we use the notation $g_\lambda$ to denote the corresponding left-invariant Riemannian metric induced by $\langle \cdot, \cdot \rangle_\lambda$, then the following relation holds:
\begin{align}\label{regg}
\lambda d_g(p,q) =  d_{g_{\lambda^{-2}}}(\delta_\lambda(p), \delta_\lambda(q)) , \qquad \forall \, p, q \in \G, \lambda > 0.
\end{align}
\end{lemma}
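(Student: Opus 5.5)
The plan is to show that $\delta_\lambda$ maps curves to curves and multiplies lengths by exactly $\lambda$ when the source length is measured in $g$ and the target length in $g_{\lambda^{-2}}$. Since $\delta_\lambda$ is a diffeomorphism of $\G$ with inverse $\delta_{\lambda^{-1}}$, taking infima over smooth curves joining $p$ to $q$ (equivalently, $\delta_\lambda(p)$ to $\delta_\lambda(q)$) will then give \eqref{regg}.

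The first step is to compute the differential of $\delta_\lambda$ on left-invariant vector fields. Because $\delta_\lambda$ is a Lie group automorphism, $\delta_\lambda\circ L_p = L_{\delta_\lambda(p)}\circ\delta_\lambda$. Its differential at the identity is the Lie algebra automorphism $\delta_\lambda$ on $\g$; this is clear in exponential coordinates, where $\delta_\lambda$ is linear. Combining these with \eqref{defleft}, I would get
\[
d\delta_\lambda(p)\,\X(p) = (\delta_\lambda \X)(\delta_\lambda(p)), \qquad \forall\, \X\in\g,\ p\in\G .
\]

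The second step is the length computation. Let $\gamma$ be a smooth curve, and write $\dot\gamma(\tau) = \sum_j u_j(\tau)\X_j(\gamma(\tau))$ in an orthonormal basis adapted to the orthogonal splitting, so that the first $\dim\g_1$ vectors span $\g_1$ and the rest span $\g_2$. Decompose $u = u^{(1)} + u^{(2)}$ accordingly. Then $\frac{d}{d\tau}\delta_\lambda(\gamma) = \lambda u^{(1)} + \lambda^2 u^{(2)}$, expressed in left-invariant fields at $\delta_\lambda(\gamma(\tau))$. Measuring this velocity in $\langle\cdot,\cdot\rangle_{\lambda^{-2}}$ gives
\[
\lambda^2|u^{(1)}|^2 + \lambda^{-2}\lambda^4|u^{(2)}|^2 = \lambda^2\bigl(|u^{(1)}|^2+|u^{(2)}|^2\bigr).
\]
This uses the orthogonality of $\g_1$ and $\g_2$, so that no cross terms appear. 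Taking square roots and integrating yields $\ell_{g_{\lambda^{-2}}}(\delta_\lambda\circ\gamma) = \lambda\,\ell_g(\gamma)$.

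The third step is to conclude. Since $\gamma\mapsto\delta_\lambda\circ\gamma$ is a bijection between smooth curves from $p$ to $q$ and smooth curves from $\delta_\lambda(p)$ to $\delta_\lambda(q)$, the infima match, and \eqref{regg} follows.

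The only delicate point is the first step, namely the identity $d\delta_\lambda\,\X = (\delta_\lambda\X)\circ\delta_\lambda$ for left-invariant fields. I would justify it directly from the group law \eqref{mulalg}: since $\delta_\lambda$ is linear and preserves brackets, $\delta_\lambda(p\ast tX) = \delta_\lambda p \ast t\,\delta_\lambda X$, and differentiating at $t=0$ gives the identity. Everything else is a routine computation.
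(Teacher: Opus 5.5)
Your proposal is correct and follows essentially the paper's proof. Both rest on the same computation: $\delta_\lambda$ turns the velocity coefficients $(u^{(1)},u^{(2)})$ in the left-invariant frame into $(\lambda u^{(1)},\lambda^2 u^{(2)})$, so $\ell_{g_{\lambda^{-2}}}(\delta_\lambda\circ\gamma)=\lambda\,\ell_g(\gamma)$. The only cosmetic difference is the conclusion: you get equality at once from the bijection $\gamma\mapsto\delta_\lambda\circ\gamma$ on curves, whereas the paper first proves $d_{g_{\lambda^{-2}}}(\delta_\lambda(p),\delta_\lambda(q))\le\lambda\, d_g(p,q)$ and then obtains the reverse inequality by applying it to the metric $g_{\lambda^{-2}}$ with $\lambda^{-1}$ in place of $\lambda$.
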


\begin{proof}
Throughout the proof we regard $(\G,\cdot)$ as $(\g,\ast)$. For any smooth path $\gamma: [0,1] \to \G$ joining $p$ with $q$, we can write 
\begin{equation}\label{CauchyProblem}
\dot{\gamma}(s) = \sum_{j = 1}^n u_j(s) \X_j(\gamma(s)), \qquad \gamma(0) = p,  \qquad \gamma(1) = q,
\end{equation}
where $\{\X_1,\ldots, \X_m\}$ is an orthonormal basis of $\g_1$ under $\langle \cdot, \cdot \rangle_{\g_1}$, $\{\X_{m + 1},\ldots, \X_n\}$ is an orthonormal basis of $\g_2$ under $\langle \cdot, \cdot \rangle_{\g_2}$, and $u_1(\cdot), \ldots, u_n(\cdot)$ are smooth functions. From the formula \eqref{mulalg}, we obtain
\[
d L_p (0) a =  a + \frac{1}{2} [p,a], \qquad \forall \, p,a \in \g
\]
and combining this with  \eqref{defleft}, we get  
\begin{align}\label{curve}
\dot{\gamma}(s) = \sum_{j = 1}^m u_j(s) \X_j + \frac{1}{2} \left[\gamma(s), \sum_{j = 1}^m u_j(s) \X_j\right] + \sum_{j = m +1}^n u_j(s) \X_j, \quad \forall \, s \in [0,1].
\end{align}
Then for any $\lambda > 0$, define $\gamma_\lambda := \delta_\lambda(\gamma)$. Factoring $\delta_\lambda$ on both sides of \eqref{curve}, we obtain
\[
\dot{\gamma}_\lambda(s) =  \sum_{j = 1}^m \lambda u_j(s) \X_j(\gamma_\lambda(s))  + \sum_{j = m + 1}^n \lambda^2 u_j(s) \X_j(\gamma_\lambda(s)), \quad \gamma_\lambda(0) = \delta_\lambda (p), \quad \gamma_\lambda(1) = \delta_\lambda(q).
\]
So $\gamma_\lambda$ is a smooth curve joining $\delta_\lambda(p)$ with $\delta_\lambda(q)$ and $\ell_{g_{\lambda^{-2}}}(\gamma_\lambda) = \lambda\ell_g(\gamma)$, which implies
\[
d_{g_{\lambda^{-2}}}(\delta_\lambda(p), \delta_\lambda(q)) \le \lambda d_g(p,q), \qquad \forall \, p,q \in \G, \lambda > 0.
\]
Since in the argument above the inner product $g$, points $p,q \in \G$, and $\lambda > 0$ are arbitrary, we obtain
\[
d_g(p,q) \le \lambda d_{g_{\lambda^2}} (\delta_{\lambda^{-1}}(p),\delta_{\lambda^{-1}}(q)), \qquad \forall \, p,q \in \G, \lambda > 0.
\]
Replacing $\lambda$ with $\lambda^{-1}$, we can prove the opposite direction and this proves the lemma.
\end{proof}

\medskip

\begin{proof}[Proof of Theorem \ref{t1}]
Given a step-two Carnot group $\G$ and a left-invariant Riemannian metric $g$ induced by the inner product $\langle \cdot, \cdot \rangle$, we define $\g_2 := [\g,\g]$ and $\g_1$ is the orthogonal complement of $\g_2$ in $\g$ w.r.t. $\langle \cdot, \cdot \rangle$. We can prove that 
\begin{align}\label{straii}
\g = \g_1 \oplus \g_2, \qquad \g_2 = [\g,\g] = [\g_1,\g_1], \qquad [\g_1,\g_2] = [\g_1,[\g,\g]] = \{0\}.
\end{align}
So this decomposition gives a stratification. Since $\nu_g$ is left-invariant and thus is a Haar measure, we obtain $\nu_g = c_g\mu$ for some $c_g > 0$. Using the left-invariance, and the result in Lemma \ref{lort} (with $\lambda = 1/2$ and $p = 0$), we obtain
\[
\frac{\mu(B_g(p,2r))}{\mu(B_g(p,r))} = \frac{\mu(B_g(0,2r))}{\mu(B_g(0,r))} =  \frac{\mu(\delta_2(B_{g_4}(0,r)))}{\mu(B_g(0,r))}  = 2^Q  \frac{\mu(B_{g_4}(0,r))}{\mu(B_g(0,r))} \le 2^Q,
\]
where we have also used \eqref{Haar} and \eqref{compar} respectively. Since $g$ is arbitrary, we proved that 
\[
\sup_{g \in \mathcal{L}(\G), \, p \in \G, \, r > 0} \frac{\nu_g(B_g(p,2r))}{\nu_g(B_g(p,r))}  \le 2^Q.
\]
To show the opposite inequality, for the left-invariant Riemannian metric $g$ in the beginning of the proof with the stratification \eqref{straii}, we can define a family of left-invariant Riemannian metrics $\{g_\lambda\}_{\lambda > 0}$ as in Lemma \ref{lort}. It follows from \cite[Theorem 1.4 (see also Remark 4.2 in \S~4.1 there)]{ALN23} that $d_{g_\lambda} \to d_{g_\infty}$ as $\lambda \to +\infty$ uniformly on compact subsets of $\G \times \G$. Here $d_{g_\infty}$ is a finite (sub-Riemannian) distance on $\G$. Under the limit, \eqref{regg} becomes
\begin{align}\label{regg2}
\lambda d_{g_\infty}(p,q) =  d_{g_\infty}(\delta_\lambda(p), \delta_\lambda(q)) , \qquad \forall \, p, q \in \G, \lambda > 0.
\end{align}
Since $\lambda \le \lambda'$ implies
$d_{g_\lambda} \le d_{g_{\lambda'}}$ by \eqref{compar}, we have 
\[
B_{g_{\lambda'}}(0,r) \subset B_{g_{\lambda}}(0,r).
\]
As a result, we obtain
\[
B_{g_\infty}(0,r) \subset \cap_{\lambda > 0} B_{g_{\lambda}}(0,r) = \lim_{\lambda \to +\infty} B_{g_{\lambda}}(0,r) \subset  \overline{B_{g_\infty}(0,r)}.
\]
Now \eqref{regg2} implies that the sphere has measure $0$ and so we have
\[
\lim_{\lambda \to +\infty} \mu(B_{g_{\lambda}}(0,r)) = \mu(B_{g_\infty}(0,r)), \qquad \forall \, r > 0.
\]
By \eqref{regg2} again, we have for any $r > 0$
\[
2^Q = \frac{\mu(B_{g_\infty}(0,2r))}{\mu(B_{g_\infty}(0,r))} = \lim_{\lambda \to +\infty} \frac{\mu(B_{g_{\lambda}}(0,2r))}{\mu(B_{g_{\lambda}}(0,r))} \le \sup_{g \in \mathcal{L}(\G), \, p \in \G, \, r > 0} \frac{\nu_g(B_g(p,2r))}{\nu_g(B_g(p,r))}.
\]
\end{proof}

\begin{proof}[Proof of Proposition \ref{pro1}]
It is sufficient to follow the argument in the proof of Theorem \ref{t1} but with $\lambda = \frac{r}{R}$ instead of $1/2$.
\end{proof}

\begin{remark}\label{rem}
The Heisenberg-type groups $\mathbb{H}(2n,m)$ has homogeneous dimension $Q = 2n+2m$. Let $\{\mathrm{X}_1, \ldots, \mathrm{X}_{2n}, \mathrm{T}_1,\ldots,\mathrm{T}_m \}$ be the canonical basis, in \cite{BLZ25} the authors considered the uniform volume estimates for a natural class of Laplacian $\Delta_\mathbb{B} = \sum_{j = 1}^{2n} \mathrm{X}_j^2 + \sum_{i = 1}^m (\sum_{j = 1}^m \mathbb{B}_{i,j} \mathrm{T}_j)^2$, where $ (\mathbb{B}_{i,j})_{1 \le i,j \le m}= \mathbb{B}$ is an arbitrary real matrix. Let $B_{\mathbb{B}}(p,r)$ be the ball centered at $p$ with radius $r>0$ defined by the Carnot-Carath\'{e}odory distance associated to $\Delta_\mathbb{B}$. One can deduce from the proof of \cite[Lemma 5.3]{BLZ25} that
\[
\mu (B_{\mathbb{B}}(0,r)) = \frac{\pi^{\frac{2n+m}{2}}}{\Gamma(n) n^{\frac{m}{2}+1}} \, r^{2n+m} \det \left( \frac{r^2}{12} \mathbb{I}_m + \mathbb{B}\mathbb{B}^T  \right)^{\frac{1}{2}} (1+ o_n (1) ),
\]
which holds uniformly in $(\mathbb{B}, r, m)$ as $n \to +\infty$. This gives
\[
\sup_{\mathbb{B},\, r > 0} \frac{\mu(B_{\mathbb{B}}(0,2r))}{\mu(B_\mathbb{B}(0,r))} = 2^Q \, (1+ o_n(1)).
\]
From Theorem \ref{t1}, we know the doubling constant here is indeed $2^Q$ by a possible approximation argument like the one in the proof of Theorem \ref{t1} (since the matrix $\mathbb{B}$ is not necessarily invertible).
    
\end{remark}

\medskip

\section*{Acknowledgements}

\medskip

This project has received funding from the European Research Council (ERC) under the European Union’s Horizon 2020 research and innovation programme (grant agreement GEOSUB, No. 945655).

%\bigskip

%\nocite{*}
%\bibliographystyle{alpha}
%\bibliography{UDbib}

\begin{thebibliography}{ALDNG23}

\bibitem[ALDNG23]{ALN23}
Gioacchino Antonelli, Enrico Le~Donne, and Sebastiano Nicolussi~Golo.
\newblock Lipschitz {C}arnot-{C}arath\'eodory structures and their limits.
\newblock {\em J. Dyn. Control Syst.}, 29(3):805--854, 2023.

\bibitem[AS20]{AS20}
Luigi Ambrosio and Giorgio Stefani.
\newblock Heat and entropy flows in {C}arnot groups.
\newblock {\em Rev. Mat. Iberoam.}, 36(1):257--290, 2020.

\bibitem[BL26]{BL26}
Cheng {Bi} and Hong-Quan {Li}.
\newblock {Uniform volume estimates and maximal functions on generalized
  Heisenberg-type groups}.
\newblock {\em arXiv e-prints}, page arXiv:2604.14715, April 2026.

\bibitem[BLU07]{BLU07}
A.~Bonfiglioli, E.~Lanconelli, and F.~Uguzzoni.
\newblock {\em Stratified {L}ie groups and potential theory for their
  sub-{L}aplacians}.
\newblock Springer Monographs in Mathematics. Springer, Berlin, 2007.

\bibitem[BLZ25]{BLZ25}
Cheng {Bi}, Hong-Quan Li, and Ye~Zhang.
\newblock Centered {H}ardy-{L}ittlewood maximal functions on {H}-type groups
  revisited.
\newblock {\em Math. Ann.}, 391(3):3765--3797, 2025.

\bibitem[BR18]{BR18}
Davide Barilari and Luca Rizzi.
\newblock Sharp measure contraction property for generalized {H}-type {C}arnot
  groups.
\newblock {\em Commun. Contemp. Math.}, 20(6):1750081, 24, 2018.

\bibitem[BR20]{BR20}
Zeinab Badreddine and Ludovic Rifford.
\newblock Measure contraction properties for two-step analytic sub-{R}iemannian
  structures and {L}ipschitz {C}arnot groups.
\newblock {\em Ann. Inst. Fourier (Grenoble)}, 70(6):2303--2330, 2020.

\bibitem[EGSC18]{EGS18}
Nathaniel Eldredge, Maria Gordina, and Laurent Saloff-Coste.
\newblock Left-invariant geometries on {$\rm SU(2)$} are uniformly doubling.
\newblock {\em Geom. Funct. Anal.}, 28(5):1321--1367, 2018.

\bibitem[EGSC24]{EGS24}
Nathaniel {Eldredge}, Maria {Gordina}, and Laurent Saloff-Coste.
\newblock {Uniform doubling for abelian products with $\operatorname{SU}(2)$}.
\newblock {\em arXiv e-prints}, page arXiv:2412.17102, December 2024.

\bibitem[GZ24]{GN24}
Sebastiano~Nicolussi Golo and Ye~Zhang.
\newblock Curvature exponent and geodesic dimension on {S}ard-regular {C}arnot
  groups.
\newblock {\em Anal. Geom. Metr. Spaces}, 12(1):Paper No. 20240004, 30, 2024.

\bibitem[Jui09]{J09}
Nicolas Juillet.
\newblock Geometric inequalities and generalized {R}icci bounds in the
  {H}eisenberg group.
\newblock {\em Int. Math. Res. Not. IMRN}, (13):2347--2373, 2009.

\bibitem[Jui10]{J10}
Nicolas Juillet.
\newblock On a method to disprove generalized {B}runn-{M}inkowski inequalities.
\newblock In {\em Probabilistic approach to geometry}, volume~57 of {\em Adv.
  Stud. Pure Math.}, pages 189--198. Math. Soc. Japan, Tokyo, 2010.

\bibitem[Jui21]{J21}
Nicolas Juillet.
\newblock Sub-{R}iemannian structures do not satisfy {R}iemannian
  {B}runn-{M}inkowski inequalities.
\newblock {\em Rev. Mat. Iberoam.}, 37(1):177--188, 2021.

\bibitem[LNNR25]{DNGR25}
Enrico {Le Donne}, Luca {Nalon}, Sebastiano {Nicolussi Golo}, and Seung-Yeon
  {Ryoo}.
\newblock {Asymptotics of Riemannian Lie groups with nilpotency step 2}.
\newblock {\em arXiv e-prints}, page arXiv:2503.00560, March 2025.

\bibitem[MR23]{MR23}
Mattia Magnabosco and Tommaso Rossi.
\newblock Almost-{R}iemannian manifolds do not satisfy the curvature-dimension
  condition.
\newblock {\em Calc. Var. Partial Differential Equations}, 62(4):Paper No. 123,
  27, 2023.

\bibitem[NP26]{NP25}
Dimitri Navarro and Jiayin Pan.
\newblock Universal non-cd of sub-riemannian manifolds.
\newblock {\em Journal für die reine und angewandte Mathematik (Crelles
  Journal)}, 2026.

\bibitem[Rif13]{R13}
Ludovic Rifford.
\newblock Ricci curvatures in {C}arnot groups.
\newblock {\em Math. Control Relat. Fields}, 3(4):467--487, 2013.

\bibitem[Riz16]{R16}
Luca Rizzi.
\newblock Measure contraction properties of {C}arnot groups.
\newblock {\em Calc. Var. Partial Differential Equations}, 55(3):Art. 60, 20,
  2016.

\bibitem[RS23]{RS23}
Luca Rizzi and Giorgio Stefani.
\newblock Failure of curvature-dimension conditions on sub-{R}iemannian
  manifolds via tangent isometries.
\newblock {\em J. Funct. Anal.}, 285(9):Paper No. 110099, 31, 2023.

\bibitem[Zha25]{Z25}
Ye~Zhang.
\newblock On the lower bound of the curvature exponent on step-two {C}arnot
  groups.
\newblock {\em Proc. Amer. Math. Soc.}, 153(10):4437--4446, 2025.

\end{thebibliography}

\end{document}